\documentclass[a4paper,reqno,11pt]{amsart}
\usepackage[a4paper,margin=2.5cm]{geometry}
\usepackage[T1]{fontenc}
\usepackage[utf8]{inputenc}
\usepackage[english]{babel}
\usepackage{amssymb,amsmath,amsthm,mathtools}
\usepackage[dvipsnames,svgnames,table]{xcolor}
\usepackage{graphicx}
\usepackage{caption,subcaption}
\usepackage{lmodern}
\usepackage[foot]{amsaddr}
\usepackage[shortlabels]{enumitem}
\usepackage[unicode=true]{hyperref}
\usepackage[capitalise,noabbrev]{cleveref}
\usepackage{thmtools,thm-restate}
\usepackage{comment}
\usepackage{mdframed}
\usepackage{algorithmicx}
\usepackage[noend]{algpseudocode}

\DeclarePairedDelimiter\set{\{}{\}}

\DeclarePairedDelimiter\ceil{\lceil}{\rceil}

\newcommand{\defin}[1]{\emph{\textcolor{ForestGreen}{#1}}}
\newcommand{\defmath}[1]{\defin{\text{$#1$}}}

\newcommand{\Oh}{\mathcal{O}}

\newcommand{\chC}{\chi_{\mathrm{cen}}}
\newcommand{\chL}{\chi_{\mathrm{lin}}}

\let\le\leqslant
\let\ge\geqslant
\let\leq\leqslant
\let\geq\geqslant

\let\epsilon\varepsilon

\let\setminus\backslash

\crefformat{equation}{#2(#1)#3}
\crefformat{subsection}{Subsection #2#1#3}

\makeatletter
\def\thm@space@setup{
  \thm@preskip=2mm
  \thm@postskip=0mm
}
\makeatother

\algdef{SE}[DOWHILE]{Do}{doWhile}{\algorithmicdo}[1]{\algorithmicwhile\ #1}

\mdfdefinestyle{dontsplit}{
  hidealllines=true,
  nobreak=true,
  leftmargin=0pt,
  rightmargin=0pt,
  innerleftmargin=0pt,
  innerrightmargin=0pt,
}

\newmdtheoremenv[style=dontsplit]{theorem}{Theorem}
\newmdtheoremenv[style=dontsplit]{lemma}[theorem]{Lemma}
\newmdtheoremenv[style=dontsplit]{observation}[theorem]{Observation}
\newmdtheoremenv[style=dontsplit]{proposition}[theorem]{Proposition}
\newmdtheoremenv[style=dontsplit]{question}[theorem]{Question} 
\newmdtheoremenv[style=dontsplit]{corollary}[theorem]{Corollary} 
\newmdtheoremenv[style=dontsplit]{problem}[theorem]{Problem}
\newmdtheoremenv[style=dontsplit]{conjecture}[theorem]{Conjecture}

\newtheorem*{problem*}{Problem}
\newtheorem*{conjecture*}{Conjecture}
\newtheorem*{remark*}{Remark}
\newtheorem*{question*}{Question} 

\theoremstyle{remark}
\newmdtheoremenv[style=dontsplit]{claim}[theorem]{Claim}

\crefname{claim}{Claim}{Claims}

\newtheorem*{claim*}{Claim}

\graphicspath{{figs/}}

\setlist[enumerate,1]{label=\textup{(\roman*)}}

\makeatletter
\newcommand{\myitem}[1]{%
\item[#1]\protected@edef\@currentlabel{#1}%
}
\makeatother

\hypersetup{
    colorlinks,
    linkcolor={RoyalBlue},
    citecolor={RubineRed},
    urlcolor={blue!80!black},
    pdftitle={Superlinear Separation Between Linear and Centered Colorings},
    pdfauthor={Jędrzej Hodor and Piotr Micek}
}

\let\phi\varphi

\begin{document}

\title[Superlinear Separation Between Linear and Centered Colorings]
{Superlinear Separation Between Linear and Centered Colorings}

\author[J.~Hodor]{J\k{e}drzej Hodor}
\address[J.~Hodor]{Theoretical Computer Science Department,
Faculty of Mathematics and Computer Science and Doctoral School of Exact and Natural Sciences,
Jagiellonian University, Krak\'ow, Poland}
\email{\href{mailto:jedrzej.hodor@gmail.com}{jedrzej.hodor@gmail.com}}

\author[P.~Micek]{Piotr Micek}
\address[P.~Micek]{Theoretical Computer Science Department, Faculty of Mathematics and Computer Science, Jagiellonian University, Kraków, Poland.}
\email{\href{mailto:piotr.micek@uj.edu.pl}{piotr.micek@uj.edu.pl}}
\thanks{The authors were partially supported by the National Science Centre, Poland,  under grant UMO-2023/05/Y/ST6/00079 within the WEAVE-UNISONO program.}

\begin{abstract}



A vertex-coloring of a graph is centered if every connected subgraph has a vertex with a unique color. 
A vertex-coloring of a graph is linear if every path in the graph has a vertex with a unique color. 
Let $\chC(G)$ and $\chL(G)$ be the minimum number of colors in a centered (resp.\ linear) coloring of $G$. 
We present a family of graphs witnessing that if $f$ is a nondecreasing function such that $\chC(G) \leq f(\chL(G))$ for every graph $G$, then
$f(k) = \Omega(k^2 / \log k)$.
The construction was found by OpenAI's GPT-5.6 Sol Pro.

\end{abstract}

\maketitle

\section{Introduction}\label{sec:introduction}

For a coloring $\varphi$ of a graph $G$, a vertex $v$ of a subgraph $H$ of $G$ is a \defin{$\varphi$-center} of $H$ if the color $\varphi(v)$ appears exactly once in $H$.\footnote{All graph colorings in this paper are vertex-colorings.}
A coloring $\varphi$ of a graph $G$ is \defin{centered} if every connected subgraph of $G$ has a $\varphi$-center.
The \defin{centered chromatic number} of a graph $G$ is the minimum number of colors in a centered coloring of $G$; we denote this value by \defin{$\chC(G)$}. 
The centered chromatic number is a fundamental parameter in structural graph theory; it is also known as treedepth, elimination height, or unique maximum chromatic number.

Motivated by algorithmic applications, Cheilaris and Tóth~\cite{CT11} introduced the following relaxation of centered colorings.\footnote{Cheilaris and Tóth use the name \emph{conflict-free coloring with respect to paths}. However, for consistency with more recent work, we use the name \emph{linear coloring}.}
A coloring $\varphi$ of a graph $G$ is \defin{linear} if every path in $G$ has a $\varphi$-center.
The \defin{linear chromatic number} of a graph $G$ is the minimum number of colors in a linear coloring of $G$; we denote this value by \defin{$\chL(G)$}.

It follows directly from the definitions that $\chL(G) \leq \chC(G)$ for every graph $G$.
Conversely, if $\chL(G) \leq k$, then $G$ contains no path on $2^k$ vertices as a subgraph, which immediately implies $\chC(G) < 2^k$.
Kun, O’Brien, Pilipczuk, and Sullivan~\cite{Kun2020} substantially improved this exponential bound by showing that $\chC(G)$ is bounded polynomially in $\chL(G)$.
The best-known bound, $\chC(G) \leq \widetilde{\Oh}(\chL(G)^{10})$, follows from results of Czerwi{\'n}ski, Nadara, and Pilipczuk~\cite{CNP}, and Bose, Dujmović, Houdrouge, Javarsineh, and Morin~\cite{Bose}.
Kun et al.~\cite{Kun2020} conjectured the much stronger inequality $\chC(G) \leq 2 \chL(G)$ for every graph $G$.
We disprove this conjecture.

\begin{theorem}\label{thm:main}
There exists a family of graphs $(G_k)_{k=1}^\infty$ and an increasing unbounded sequence $(x_k)_{k=1}^\infty$ of positive integers such that $\lim_{k \rightarrow \infty} x_{k+1} / x_k = 1$,
\[
\chL(G_k)\leq x_k,\ \ \text{ and }\ \  \chC(G_k)\geq\left(\frac{1}{2}-o(1)\right)\frac{x_k^2}{\log x_k}.
\]
\end{theorem}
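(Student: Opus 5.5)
The plan is to prove the two inequalities separately for one explicit family, and then tune the parameters. I would build $G_k$ together with an explicit linear coloring, so that the bound $\chL(G_k)\le x_k$ is shown by direct verification. The bound $\chC(G_k)\ge(\frac12-o(1))x_k^2/\log x_k$ would then come from a counting/separator argument, since a centered coloring with $c$ colors is the same as an elimination forest of height $c$. The $\log$ in the denominator suggests the lower bound will come from an entropy/union-bound estimate over possible ``first centers'' rather than from a clean structural certificate. I do not have the construction in hand, so what follows is a definite guess that may need revision.

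\emph{Step 1: construction.} Fix $x=x_k$. Split the palette into $t\approx x/\log_2 x$ ``blocks'' of about $\log_2 x$ colors each. Inside each block, colors encode binary addresses via the ruler (unique-maximum) coloring of a path, so a path of length about $x$ is linearly colorable with about $\log x$ colors. I would take $G_k$ on roughly $N\approx x^2/\log x$ vertices. Its vertices are arranged as $t$ ``rows'' and about $x$ ``columns''; row $i$ is a path colored by the ruler coloring in block $i$. Between rows I add the edges of a sparse bipartite pattern, chosen so that every path in $G_k$ visits each row in a single contiguous segment, or at most a bounded number of them. The guiding principle is that paths cannot profit from the high connectivity used in the lower bound, because they cannot revisit a row.

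\emph{Step 2: the linear coloring.} I would show that the natural coloring has a center on every path $P$. Pick the row $i$ where $P$ ends, or more generally the lexicographically extremal row met by $P$. Its intersection with $P$ is a subpath of the ruler-colored row $i$, so it has a unique-maximum color within block $i$. Block $i$ is used only on row $i$, so that vertex is a center of all of $P$. This step forces the ``one segment per row'' property of Step 1, and I would verify that property by an exchange argument on the inter-row pattern.

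\emph{Step 3: the lower bound, the main obstacle.} Here I must show that every elimination forest of $G_k$ has height at least $(\frac12-o(1))N$. I would argue by induction on connected subgraphs: removing the root (the center) leaves a connected piece of size at least $|V|-1-(\text{small})$, so no small set of removed vertices can split a connected subgraph into balanced parts. For this the inter-row pattern should behave like an expander on the $N$ vertices, so that any set $S$ with $|S|\le(\frac12-\epsilon)N$ leaves a component of size larger than $N-2|S|$; iterating this gives height at least about $N/2$. The factor $\frac12$ in the statement matches exactly this ``half the vertices must be deleted before the graph falls apart'' phenomenon. I expect the tension between expansion and the one-segment-per-row condition to be the crux. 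I would first try a random bipartite pattern between consecutive rows with a union bound over separators: there are $\binom{N}{|S|}$ choices, which is where the $\log x$ loss enters, and choosing $N=x^2/(2\log x)$ balances it.

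Finally, to get $x_{k+1}/x_k\to1$, I would take $x_k=k$ (or every integer large enough) and round the block sizes.
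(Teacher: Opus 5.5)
Steps 1--2 and Step 3 of your proposal require properties that contradict each other, so the proposal does not lead to a proof. Step 2 only works if every path meets each row $R_i$ in a single contiguous segment. Allowing a bounded number of segments does not help: two disjoint intervals of a ruler-colored row can carry the same color sequence, so their union need not have a uniquely colored vertex.

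The one-segment property forces every cycle to meet each row in at most one vertex. Suppose a cycle passed through distinct $a,b\in V(R_i)$. The only edges inside a row are the row's own path edges, so one of the two $a$--$b$ arcs of the cycle leaves $R_i$. That arc is a path that starts and ends in $R_i$ without being contained in it, hence it meets $R_i$ in at least two segments.

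Now apply your expansion condition from Step 3 with $|S|=1$. It says $G_k-v$ is connected for every $v$, so $G_k$ is $2$-connected, and any two vertices of a row lie on a common cycle. This is a contradiction as soon as some row has two vertices.

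In fact the one-segment property rules out the whole framework. It makes every row edge a bridge, and every other block contains at most one vertex per row, hence at most $t$ vertices. So $G_k$ has treewidth less than $t$ and treedepth $O(t\log N)$, which is $O(x)$ with your parameters: no superlinear separation at all. Separately, the union bound over $\binom{N}{|S|}$ sets in Step 3 is not an argument that produces a factor $\log x$. Also, height $N/2$ with $N=x^2/(2\log x)$ would only give $x^2/(4\log x)$.

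The missing idea is to certify the centered lower bound structurally and recursively, through a minor, rather than by expansion, while the linear coloring pays only a logarithmic surcharge per level. The paper sets $G_1=K_1$ and $G_{k+1}=L(G_k,\ell_k)$, built as follows:
take a clique $v_1,\dots,v_{p+1}$ with $p=\ell_k$;
hang from each $v_i$ a path $P_i$ with $|B_i|$ new vertices, where $B_1,\dots,B_{p+1}$ are distinct subsets of $[\lceil\log(p+1)\rceil]$;
join the far end $u_i$ of $P_i$ to every vertex of a fresh copy $F_i$ of $G_k$.

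For the linear coloring, color $v_i$ with $i$. Color $F_i$ linearly from $[p+1]\setminus\{i\}$, which is possible since $\chL(G_k)\le p$. Give the new vertices of $P_i$ the distinct colors $\alpha_b$, $b\in B_i$. A path meets at most two copies. If it meets $F_i$ and $F_j$, it contains all of $P_i$ and $P_j$ and no other $\alpha$-colored vertex, so $\alpha_b$ with $b\in B_i\triangle B_j$ is unique on it. In the remaining cases a clique vertex, a vertex of some $P_i$, or the coloring of a copy provides a center. Thus $\chL(G_{k+1})\le\ell_k+1+\lceil\log(\ell_k+1)\rceil$.

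For the lower bound, contracting the paths $P_i$ gives $A(G_k,\ell_k)$. A short induction shows $\chC(A(F,p))\ge p+1+\chC(F)$: the center of the whole graph lies in some branch $\{v_i\}\cup V(F_i)$, and deleting that branch leaves a copy of $A(F,p-1)$ on which the center's color is absent. Hence $c_{k+1}=c_k+\ell_k+1$ while $\ell_{k+1}=\ell_k+1+\lceil\log(\ell_k+1)\rceil$, and summing gives $c_k\sim\ell_k^2/(2\log\ell_k)$.

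The $\log$ is the per-level price of binary-encoding which branch a path enters, not a union-bound loss. Also, $x_{k+1}/x_k\to1$ holds automatically for $x_k=\ell_k$, because $\ell_{k+1}-\ell_k=O(\log\ell_k)$.
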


This implies that if $f$ is a nondecreasing function such that for every graph $G$, we have $\chC(G) \leq f(\chL(G))$, then
$f(k) = \Omega(k^2 / \log k)$.

\section{Construction}\label{sec:preliminaries}
Throughout, \defin{$\log(\cdot)$} denotes the base-$2$ logarithm and \defin{$\ln(\cdot)$} denotes the natural logarithm.
For a positive integer $p$, write $\defmath{[p]}=\{1,\ldots,p\}$, and let $\defmath{[0]} = \emptyset$.
All graphs are finite, simple, and undirected.
Connected graphs and paths are assumed to be nonnull.
Let $(a_k)_{k=1}^\infty$ and $(b_k)_{k=1}^\infty$ be two real-valued sequences.
We write \defin{$a_k \sim b_k$} when $\lim_{k\rightarrow \infty} a_k / b_k = 1$.
We also write \defin{$a_k = o(b_k)$} when $\lim_{k\rightarrow \infty} a_k / b_k = 0$.
The symbol \defin{$1$} denotes the constant sequence equal to $1$, hence the notation \defin{$o(1)$}.

Let $F$ be a graph and let $p$ be a nonnegative integer.
We construct a graph \defin{$L(F,p)$} as follows.
Let $h = \ceil{\log(p+1)}$ and fix an arbitrary enumeration $B_1,\dots,B_{2^h}$ of all subsets of $[h]$. 
Note that $p+1 \leq 2^h$. 
We start the construction with a complete graph on the vertex set $\{v_1,\dots,v_{p+1}\}$.
For every $i \in [p+1]$, attach to $v_i$ a path $P_i$ on $|B_i|+1$ vertices, whose other endpoint is called $u_i$.
All vertices of $P_i - v_i$ are new.
Note that $u_i = v_i$ when $B_i = \emptyset$.
Fix $p+1$ vertex-disjoint copies $F_1,\dots,F_{p+1}$ of $F$ on fresh vertices.
Finally, for every $i \in [p+1]$, we add an edge between $u_i$ and each vertex of $F_i$. 
This completes the description of $L(F,p)$. 
\begin{lemma}\label{lem:lin}
    Let $F$ be a graph, let $p$ be a nonnegative integer with 
    $\chL(F)\leq p$. Then  
    \[\chL(L(F,p)) \leq p + 1 + \ceil{\log (p+1)}.\]
\end{lemma}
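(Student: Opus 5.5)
The plan is to build an explicit coloring with palette $[p+1]\cup\{p+2,\dots,p+1+h\}$, where $h=\ceil{\log(p+1)}$. The first $p+1$ colors are \emph{main colors} and the last $h$ are \emph{path colors}; we identify the path colors with the elements of $[h]$.

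The coloring is as follows. Give the clique vertex $v_i$ main color $i$. For each $i$, fix a linear coloring $\psi$ of $F$ with colors in $[p]$, which exists since $\chL(F)\le p$. Transport $\psi$ to $F_i$ through a bijection $[p]\to[p+1]\setminus\{i\}$, so that $F_i$ is linearly colored and color $i$ never appears on $F_i$. Color the $|B_i|$ vertices of $P_i-v_i$ injectively with the path colors in $B_i$, in any order. When $B_i=\emptyset$ we have $u_i=v_i$ and nothing is added.

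The \emph{branch} $D_i$ is $(P_i-v_i)\cup F_i$. The key structural fact is that $D_i$ is attached to the rest of the graph only through the edge between $v_i$ and its neighbour on $P_i$, or through the edges $v_iF_i$ when $u_i=v_i$. Hence any path $Q$ decomposes as an optional tail in one branch $D_i$, then a consecutive segment of clique vertices, then an optional tail in another branch $D_j$. If $Q$ misses the clique, it lies in a single $D_i$. We then do a case analysis.

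\begin{enumerate}
\item $Q$ avoids the clique. If $Q$ meets $P_i-v_i$, any such vertex is a center, because its path color appears at most once on $P_i$ and nowhere on $F_i$. Otherwise $Q\subseteq F_i$, and linearity of the coloring of $F_i$ gives a center.
\item $Q$ meets the clique but no $F_i$. The clique vertices on $Q$ have pairwise distinct main colors, and the branch parts of $Q$ carry only path colors. So any clique vertex of $Q$ is a center.
\item $Q$ meets the clique and exactly one $F_i$. Then $Q$ must pass through $v_i$. Color $i$ is absent from $F_i$ and from all path vertices, and no other clique vertex has color $i$. Hence $v_i$ is a center.
\item $Q$ meets $F_i$ and $F_j$ with $i\neq j$. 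Then $Q$ contains all of $P_i-v_i$ and all of $P_j-v_j$, so each color of $B_i$ and each color of $B_j$ occurs once in its own branch. Since $B_i\neq B_j$, some color in $B_i\triangle B_j$ occurs exactly once on $Q$. Main colors never coincide with path colors, so that vertex is a center.
\end{enumerate}

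The step needing the most care is case (iii), which is the reason for recoloring each $F_i$ so that it avoids the color of its own attachment vertex $v_i$. I would also double-check the decomposition of $Q$ in the degenerate situation $u_i=v_i$, where $F_i$ is joined directly to the clique. The argument goes through there as well: a path entering $F_i$ still passes through $v_i$, and $B_i=\emptyset$ is distinct from every other $B_j$. The total number of colors is $p+1+h$, as required.
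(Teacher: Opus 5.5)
Your proposal is correct and matches the paper's proof: it uses the same coloring (color $i$ on $v_i$, a linear coloring of $F_i$ avoiding $i$, and distinct path colors indexed by $B_i$ on $P_i-v_i$) and the same case analysis by how many copies $F_i$ the path meets, with $B_i\triangle B_j$ supplying the center in the two-copy case. One small imprecision: when $u_i=v_i$, a path can leave $F_i$ through $v_i$ and re-enter $F_i$, so both tails of your decomposition may lie in the same branch rather than in ``another branch $D_j$''. Your case (iii) already covers such paths, since $v_i$ is still a center.
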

\begin{proof}
    Let $h = \ceil{\log(p+1)}$.
    For every $i \in [p+1]$, let $\varphi_i$ be a linear coloring of $F_i$ taking values in $[p+1] \setminus\{i\}$.
    We construct a coloring $\psi$ of $L(F,p)$ using colors from $[p+1] \sqcup \{\alpha_1,\dots,\alpha_h\}$ as follows.
    Let $i \in [p+1]$.
    Recall that $|V(P_i)\setminus\set{v_i}| = |B_i|$. 
    We arbitrarily enumerate the vertices of $P_i - v_i$ as $\set{z_{i,b} : b \in B_i}$. 
    Then, we define
    \[
    \psi(v) =\begin{cases}
    i&\textrm{if $v=v_i$ and $i\in[p+1]$,}\\
    \phi_i(v)&\textrm{if $v\in V(F_i)$ and $i\in[p+1]$,}\\
    \alpha_b&\textrm{if $v\in V(P_i)\setminus \set{v_i}$, $v=z_{i,b}$, $i\in[p+1]$, and $b\in B_i$.}
    \end{cases}
    \]
    
    We claim that $\psi$ is a linear coloring of $L(F,p)$. 
    Thus, we need to show that each path in $L(F,p)$ has a $\psi$-center.
    By construction, every path in $L(F,p)$ intersects the vertex set of at most two graphs among $F_1,\ldots,F_{p+1}$. 
    Let $P$ be a path in $L(F,p)$.
    Let $\gamma = |\{i \in [p+1] : V(P) \cap V(F_i) \neq \emptyset\}|$.
    Thus, $\gamma \in\set{0,1,2}$.
    
    First, assume that $\gamma = 0$.
    If $P$ is a subgraph of $P_i$ for some $i \in [p+1]$, then $\psi$ is an injective coloring on $V(P)$ and hence every vertex of $P$ is a $\psi$-center.
    Otherwise, $V(P)$ intersects $\{v_1,\dots,v_{p+1}\}$ and any vertex in this intersection is a $\psi$-center of $P$.

    Next, assume that $\gamma = 1$ and let $i \in [p+1]$ be such that $P$ intersects $F_i$. 
    If $P$ is contained in $F_i$, then $P$ has a $\psi$-center as $\varphi_i$ is a linear coloring of $F$.
    Otherwise, $P$ contains a vertex of $P_i$. If $P$ is contained in $L(F,p)[V(F_i) \cup V(P_i)]$, then any vertex in $V(P)\cap V(P_i)$ is a $\psi$-center of $P$. 
    Finally, if $P$ contains a vertex in $\{v_1,\dots,v_{p+1}\}$, then, since $P$ is connected, $v_i \in V(P)$, and $v_i$ is a $\psi$-center of $P$.
    This exhausts the possibilities in the case $\gamma = 1$.

    Finally, assume that $\gamma = 2$ and let $i,j \in [p+1]$ be distinct indices such that $P$ intersects $F_i$ and $F_j$.
    In this case, both $P_i$ and $P_j$ are subpaths of $P$.
    Note that, by construction, $P$ contains no other vertex with a color from $\{\alpha_1,\dots,\alpha_h\}$.
    Since $i \neq j$, we also have $B_i \neq B_j$.
    Let $b$ be an element of the symmetric difference of $B_i$ and $B_j$.
    Then $\alpha_b$ is a $\psi$-center of $P$.
    This completes the proof that $\psi$ is a linear coloring of $L(F,p)$.
\end{proof}

Let $F$ be a graph and let $p$ be a nonnegative integer.
We construct a graph \defin{$A(F,p)$} as follows.
We start the construction with a complete graph on the vertex set $\{v_1,\dots,v_{p+1}\}$.
Fix $p+1$ vertex-disjoint copies $F_1,\dots,F_{p+1}$ of $F$ on fresh vertices.
For every $i \in [p+1]$, we add an edge between $v_i$ and each vertex of $F_i$. 
This completes the description of $A(F,p)$. 
Note that $A(F,p)$ can be obtained from $L(F,p)$ by contracting all paths $P_i$.
Since centered chromatic number is minor-monotone,
\begin{equation}\label{eq:minor}
    \chC(A(F,p)) \leq \chC(L(F,p)).
\end{equation}

\begin{lemma}\label{lem:cen}
    Let $F$ be a graph, let $p$ be a nonnegative integer. Then  
    \[\chC(A(F,p)) \geq p + 1 + \chC(F).\]
\end{lemma}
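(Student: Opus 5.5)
Let $c=\chC(F)$. We prove by induction on $p$ that every centered coloring $\varphi$ of $A(F,p)$ uses at least $p+1+c$ colors. The tool is the standard recursive view of centered colorings. Take a centered coloring $\varphi$ of a connected graph $G$ and let $v$ be a $\varphi$-center of $G$ itself. Then $\varphi(v)$ appears nowhere else in $G$. Moreover, $\varphi$ restricted to each component of $G-v$ is a centered coloring of that component, because the connected subgraphs of a component are connected subgraphs of $G$.

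For the base case $p=0$, the graph $A(F,0)$ is $F_1$ together with a vertex $v_1$ adjacent to all of $F_1$. Take a $\varphi$-center $w$ of the whole graph. Its color is used only at $w$, and $A(F,0)-w$ contains a copy of $F$: if $w=v_1$ it is $F_1$ itself, and if $w\in V(F_1)$ it is $F_1-w$ plus $v_1$, which is adjacent to everything. In the second case I would instead observe that $F_1 - w + v_1$ contains $F$ as a subgraph: map $w$ to $v_1$, which is adjacent to all other vertices. The restriction of $\varphi$ to this connected copy of $F$ is centered, so it uses at least $c$ colors, none equal to $\varphi(w)$. This gives $c+1$ colors.

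For the inductive step, $A(F,p)$ is connected because the $v_i$ form a clique. Let $w$ be a $\varphi$-center of $A(F,p)$; its color is unique. There are two cases.
\begin{enumerate}
\item If $w=v_i$, then deleting $w$ leaves a connected graph containing the clique on the remaining $p$ vertices $v_j$ with their attached $F_j$. This is a copy of $A(F,p-1)$, and by induction it needs $p+c$ colors, none equal to $\varphi(w)$.
\item If $w\in V(F_i)$, then $A(F,p)-w$ is still connected, since $F_i-w$ is attached to $v_i$. Deleting also $F_i-w$ leaves a subgraph isomorphic to $A(F,p-1)$, namely $v_i$ together with the other $p$ vertices $v_j$ and their copies $F_j$. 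This subgraph is connected, so $\varphi$ restricted to it is centered. By induction it uses at least $p+c$ colors, all different from $\varphi(w)$.
\end{enumerate}
In both cases the total is at least $p+1+c$.

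The key fact used throughout is that the restriction of a centered coloring to any connected subgraph is again centered, so $\chC$ is monotone under connected subgraphs. I expect no step to be a serious obstacle. The only thing to get right is that the induction removes just the single center, so each removal costs exactly one color and we never need to track which $F_j$ survive.
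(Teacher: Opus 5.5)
Your argument is correct and is essentially the paper's: induct on $p$, take a $\varphi$-center $w$ of $A(F,p)$, and find a copy of $A(F,p-1)$ that avoids $w$. The paper does this in one step by deleting $V(F_i)\cup\{v_i\}$ whenever $w\in V(F_i)\cup\{v_i\}$, and it treats the base case (adding a universal vertex increases $\chC$) as evident, which your swap $w\mapsto v_1$ justifies. A few statements are slightly off but harmless:
\begin{itemize}
\item in case (ii), $A(F,p)-V(F_i)$ is $A(F,p-1)$ plus the bare clique vertex $v_i$, so it is not isomorphic to $A(F,p-1)$ (delete $v_i$ too);
\item in case (i), $A(F,p)-v_i$ is disconnected, since $F_i$ falls off;
\item $F$ need not be connected.
\end{itemize}
None of this matters, because the restriction of a centered coloring to an arbitrary subgraph is centered.
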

\begin{proof}
    We proceed by induction on $p$. 
    If $p = 0$, then $A(F,p)$ is the graph $F$ with an additional universal vertex, and therefore $\chC(A(F,p)) \geq 1 + \chC(F)$.
    Thus, assume that $p \geq 1$.
    Let $\varphi$ be a centered coloring of $A(F,p)$ using exactly $\chC(A(F,p))$ colors.
    It follows that $A(F,p)$ has a $\varphi$-center, say $v$.
    Let $i \in [p+1]$ be such that $v \in V(F_i) \cup \{v_i\}$.
    Note that the graph $A(F,p) - (V(F_i) \cup \{v_i\})$ is isomorphic to $A(F,p-1)$.
    The coloring $\varphi$ does not use the color of $v$ on the vertices of $A(F,p-1)$.
    By induction, $\varphi$ must use at least $p + \chC(F)$ colors on the vertices of $A(F,p-1)$.
    Therefore, $\varphi$ uses at least $p + 1 + \chC(F)$ colors, as desired.
\end{proof}

We now construct a family witnessing \cref{thm:main}.
First, we need two recursively defined sequences $(\ell_k)_{k=1}^\infty$ and $(c_k)_{k=1}^\infty$:
\begin{align}
    \ell_1=1,& \label{eq:k-rec}
    \ \ 
    \ell_{k+1}=\ell_k+1+\ceil*{\log(\ell_k+1)},\\
    c_1=1,& \label{eq:d-rec}
    \ \
    c_{k+1}=c_k+\ell_k+1.
\end{align}
Now, start with $G_1=K_1$, and recursively, having constructed a graph $G_k$, set
\[
G_{k+1}=L(G_k,\ell_k).
\]

We have $\chL(G_1) = \chC(G_1) = 1$.
Inductively, \cref{lem:lin} and \cref{lem:cen} together with \cref{eq:minor} imply that for every positive integer $k$, we have
\begin{equation}\label{eq:parameter-bounds}
\chL(G_k)\le \ell_k
\ \ \text{ and } \ \
\chC(G_k)\ge c_k.
\end{equation}
We will show the following technical lemma, which, as we first prove, implies \cref{thm:main}.

\begin{lemma}\label{lem:asymptotics}
The sequences $(\ell_k)_{k\ge1}$ and $(c_k)_{k\ge1}$ defined by the recurrences \cref{eq:k-rec} and \cref{eq:d-rec} satisfy
\[
c_k\sim \frac{\ell_k^2}{2\log \ell_k}.
\]
\end{lemma}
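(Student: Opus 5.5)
We treat the statement as a discrete asymptotic analysis. The plan is to first pin down the growth of $\ell_k$, and then sum the recurrence for $c_k$. Unrolling \cref{eq:d-rec} gives
\[
c_k = 1 + \sum_{j=1}^{k-1} (\ell_j + 1) = k + \sum_{j=1}^{k-1}\ell_j .
\]
So it suffices to show that $\sum_{j<k}\ell_j \sim \ell_k^2/(2\log \ell_k)$, together with $k = o(\ell_k^2/\log\ell_k)$. The second condition is immediate, since $\ell_k \ge k$ by \cref{eq:k-rec}.

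\textbf{Step 1: growth of $\ell_k$.} From \cref{eq:k-rec} we have $\ell_{k+1}-\ell_k = 1+\ceil{\log(\ell_k+1)} = \log \ell_k + O(1)$, and $\ell_k$ is increasing and unbounded. We compare with the ODE $y' = \log y$, whose solution satisfies $y\sim x\log x$. Concretely, we aim to show $\ell_k \sim k\log k$, and in particular $\log \ell_k \sim \log k$.

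For this, we note first that $k \le \ell_k \le 2^{k}$ trivially, so $\log \ell_k = O(k)$. A crude bound $\ell_k \le k^2$ for large $k$ follows by induction, since increments are at most $2\log(k^2)+2$. Hence $\log\ell_k = \Theta(\log k)$.

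Refining, each increment is $\log\ell_j + O(1) = \log k + O(\log\log k)$ for $j\in[k/\log^2 k, k]$, where $\ell_j$ lies between $j$ and $j^2$. More carefully, we use the bootstrap $\ell_j = j^{1+o(1)}$, so $\log \ell_j = (1+o(1))\log j$. Summing then gives $\ell_k = \sum_{j<k}(1+o(1))\log j + O(k) \sim k\log k$.

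\textbf{Step 2: summing.} Since $\ell_j \sim j\log j$, Cesàro/Stolz summation gives $\sum_{j<k}\ell_j \sim \sum_{j<k} j\log j \sim \tfrac12 k^2\log k$. Hence $c_k \sim \tfrac12 k^2\log k$.

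\textbf{Step 3: comparison with $\ell_k^2/(2\log\ell_k)$.} We have $\ell_k^2 \sim k^2\log^2 k$ and $\log \ell_k \sim \log k$ (because $\log(k\log k) = \log k + \log\log k$). Therefore
\[
\frac{\ell_k^2}{2\log\ell_k} \sim \tfrac12 k^2\log k \sim c_k .
\]

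The main obstacle is Step 1, obtaining $\ell_k\sim k\log k$ with a genuine ratio-$1$ constant rather than just $\Theta$. The bootstrap argument above is how I would handle it: first get the crude polynomial sandwich $k\le \ell_k\le k^2$, deduce that $\log \ell_j = (1+o(1))\log j$ on all $j \ge \sqrt{k}$, and discard the $j<\sqrt k$ terms, which contribute only $O(\sqrt k\log k)$.

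Stolz–Cesàro (if $a_k\to\infty$ is increasing and $(s_{k+1}-s_k)/(a_{k+1}-a_k)\to 1$, then $s_k/a_k\to1$) handles both summations cleanly. We apply it with $a_k=k\log k$ in Step 1 and with $a_k = \tfrac12 k^2\log k$ in Step 2.
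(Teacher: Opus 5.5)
\textbf{The gap.} The weak point is the step you yourself flagged as the main obstacle. The sandwich $k\le\ell_k\le k^2$ gives only $\log k\le\log\ell_k\le 2\log k$. It does \emph{not} give $\log\ell_j=(1+o(1))\log j$. For the same reason, your ``refining'' claim that the increments equal $\log k+O(\log\log k)$ does not follow from ``$\ell_j$ lies between $j$ and $j^2$''. The bootstrap $\ell_j=j^{1+o(1)}$ is asserted but never derived. Summing increments known only to lie between $\log j$ and $2\log j+O(1)$ determines $\ell_k$ only up to a factor of $2$, which is exactly the $\Theta$-versus-ratio-$1$ problem you identified.

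\textbf{The fix.} You need one more round of the bootstrap:
\begin{itemize}
\item Feeding $\ell_j\le j^2$ back into the recurrence gives increments $\delta_j\le 2+\log(j^2+1)\le 2\log j+3$.
\item Summing, $\ell_k\le 2k\log k+3k$.
\item Hence $\log k\le\log\ell_k\le\log k+\log\log k+O(1)$, i.e.\ $\log\ell_k\sim\log k$.
\end{itemize}
Now Stolz--Ces\`aro with $a_k=k\log k$ (where $a_{k+1}-a_k=\log k+O(1)$) gives $\ell_k\sim k\log k$. Your Steps 2 and 3 then go through as written.

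\textbf{Comparison with the paper.} Once repaired, your route is genuinely different from the paper's. You derive the explicit asymptotics $\ell_k\sim k\log k$ and $c_k\sim\frac12 k^2\log k$ and then compare them. The paper never determines the growth of $\ell_k$. It applies the mean value theorem to $g(x)=x^2/(2\log x)$, using only $\delta_k\sim\log\ell_k$ and $\delta_k=o(\ell_k)$, to get $g(\ell_{k+1})-g(\ell_k)=g'(\xi_k)\delta_k\sim\ell_k\sim c_{k+1}-c_k$, and then sums. The $1/\log$ factor in $g'$ cancels against $\delta_k$, so the bootstrap is sidestepped entirely. The cost is that the paper's argument does not yield the explicit growth rates that yours produces.
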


\begin{proof}[Proof of \Cref{thm:main}]
    We set $(x_k)_{k=1}^\infty = (\ell_k)_{k=1}^\infty$.
    By \cref{lem:asymptotics}, $\lim_{k \rightarrow \infty} c_k / (x_k^2 / 2\log x_k) = 1$.
    It follows from~\cref{eq:parameter-bounds} that $\chC(G_k)\geq c_k \geq (1/2 - o(1))x_k^2 / \log x_k$.
    This completes the proof of the theorem.
\end{proof}

\begin{proof}[Proof of \cref{lem:asymptotics}]
Let
\[
\delta_k=\ell_{k+1}-\ell_k=1+\ceil*{\log(\ell_k+1)}.
\]
By \cref{eq:k-rec}, $\delta_k\sim\log \ell_k$ and $\delta_k=o(\ell_k)$.
For each real number $x > 1$, we set
\[
g(x)=\frac{x^2}{2\log x}.
\]
Its derivative is
\[
g'(x)=\frac{x}{\log x}\left(1-\frac{1}{2\ln x}\right).
\]
By the mean value theorem, for every $k \geq 2$, there exists $\xi_k\in(\ell_k,\ell_{k+1})$ such that $g(\ell_{k+1})-g(\ell_k)=g'(\xi_k)\delta_k$.
Since $\delta_k=o(\ell_k)$, we have $\xi_k \sim \ell_k$ and $\log\xi_k \sim \log \ell_k$.
Also, $(1 - (1/ 2\ln \xi_k)) \sim 1$.
Consequently,
\begin{align*}
    g(\ell_{k+1})-g(\ell_k)=g'(\xi_k)\delta_k &= \frac{\xi_k}{\log \xi_k}\cdot \left(1-\frac{1}{2\ln \xi_k}\right) \cdot \delta_k 
    \\ &\sim \frac{\ell_k}{\log \ell_k} \cdot 1 \cdot \log \ell_k \sim \ell_k.
\end{align*}
On the other hand, \cref{eq:d-rec} gives $c_{k+1}-c_k=\ell_k+1\sim \ell_k$, and therefore, $g(\ell_{k+1})-g(\ell_k) \sim c_{k+1}-c_k$.
This yields $g(\ell_k) \sim c_k$ and completes the proof.
\end{proof}

\textbf{Remark. }
Note that all graphs in $(G_k)_{k=1}^\infty$ are chordal. 
Hilaire, Krnc, Milanič, and Raymond~\cite{slovenians} proved that 
$\chC(G)=\Oh(\chL(G)^2)$ for all chordal graphs $G$. 
Thus, we obtain an almost tight bound on the optimal binding function for this class.

\section*{Statement of AI use}
The construction was found by OpenAI's GPT-5.6 Sol Pro. 
The authors take responsibility for the
mathematical correctness of the presented arguments.

\bibliographystyle{plain}
\bibliography{bibliography}
\end{document}